\newtheorem{theorem}{Theorem}[section]
\newtheorem{lemma}[theorem]{Lemma}
\numberwithin{equation}{subsection}
\newtheorem{definition}[theorem]{Definition}
\title{The error for the second moment of cotangent sums related to the Riemann Hypothesis}
\author{Helmut Maier and Michael Th. Rassias}
\date{\today}
\address{Department of Mathematics, University of Ulm, Helmholtzstrasse 18, 89081 Ulm, Germany.}
\email{helmut.maier@uni-ulm.de}
\address{Institute of Mathematics, University of Zurich, CH-8057, Zurich, Switzerland }
\email{michail.rassias@math.uzh.ch}\thanks{}
\begin{document}

\maketitle
 
\begin{abstract} 
In various papers the authors have derived asymptotics for moments of certain cotangent sums related to the Riemann Hypothesis. S. Bettin \cite{bettin} has given an upper bound for the error term in these asymptotic results. In the present paper the authors establish a lower bound for the error term for the second moment.

\textbf{Key words:} Riemann Hypothesis, Riemann zeta function, Nyman-Beurling-B\'aez-Duarte criterion.\\
\textbf{2000 Mathematics Subject Classification:} 30C15, 11M26, 42A16, 42A20
\newline

\end{abstract}
\section{Introduction}
The authors in joint work (cf. \cite{mr, mr2, mrasympt, mr5}) and the second author in his thesis (\cite{rasthesis}) investigated the distribution of cotangent sums
$$c_0\left(\frac{r}{b}\right):=\sum_{m=1}^{b-1}\frac{m}{b}\cot\left(\frac{\pi m r}{b}\right)\:,$$
as $r$ ranges over the set
$$\{r\::\: (r, b)=1,\ A_0b\leq r\leq A_1b\}\:,\  \ \text{where }1/2<A_0<A_1<1\:.$$
These cotangent sums are related to the Estermann zeta function
$$E\left(s, \frac{r}{b},\alpha\right):=\sum_{n\geq 1}\frac{\sigma_\alpha(n)\exp(2\pi i n r/b)}{n^s}\:,$$
where $Re\:s>Re\:\alpha+1$, $b\geq 1$, $(r, b)=1$ and 
$$\sigma_{\alpha}(n):=\sum_{d\mid n} d^\alpha\:.$$
The cotangent sum $c_0(r/b)$ can be associated to the study of the Riemann Hypothesis through its relation with the Vasyunin sum $V$, which is defined by
$$V\left(\frac{r}{b}\right):=\sum_{m=1}^{b-1}\left\{\frac{mr}{b}\right\}\cot\left(\frac{\pi m r}{b}\right)\:,$$
where $\{u\}:=u-\lfloor u\rfloor$, $u\in\mathbb{R}$.\\
It can be shown that 
\[
V\left(\frac{r}{b}\right)=-c_0\left(\frac{\bar{r}}{b}\right),
\]
where $r\bar{r}\equiv 1(\bmod b)$.
We have
\begin{align*}
&\frac{1}{2\pi\sqrt{rb}}\int_{-\infty}^\infty\left|\zeta\left(\frac{1}{2}+it\right)\right|^2\left(\frac{r}{b}\right)^{it}\frac{dt}{\frac{1}{4}+t^2}\\
&\ \ \ =\frac{\log2\pi-\gamma}{2}\left(\frac{1}{r}+\frac{1}{b}\right)+\frac{b-r}{2rb}\log\frac{r}{b}-\frac{\pi}{2rb}\left(V\left(\frac{r}{b}\right)+V\left(\frac{b}{r}\right)\right)\:.
\end{align*}
The above formula is related to the Nymann-Beurling-Ba\'ez-Duarte-Vasyunin approach to the Riemann Hypothesis (see \cite{baez2, VAS}). Let
\[
d_N^2:=\inf_{D_N}\frac{1}{2\pi}\int_{-\infty}^\infty\left|1-\zeta D_N\left(\frac{1}{2}+it\right)\right|^2\frac{dt}{\frac{1}{4}+t^2}\tag{1.1}
\]
and the infimum is over all Dirichlet polynomials 
$$D_N(s):=\sum_{n=1}^{N}\frac{a_n}{n^s}\:,\ a_n\in\mathbb{C}\:,$$
of length $N$ (see \cite{bcf}).\\
The Riemann Hypothesis is true if and only if
$$\lim_{N\rightarrow+\infty} d_N=0\:.$$
The authors of the present paper in joint work (cf. \cite{mr2}), considered the moments defined by
\[
H_k:=\lim_{b\rightarrow+\infty} \phi(b)^{-1}b^{-2k}(A_1-A_0)^{-1}\sum_{\substack{A_0b\leq r\leq A_1b\\ (r,b)=1}} c_0\left(\frac{r}{b}\right)^{2k}\:,\ k\in\mathbb{N}\:,  \tag{1.1}
\]
where $\phi(\cdot)$ denotes the Euler phi-function. They could show that 
\[
H_k=\int_0^1\left(\frac{g(x)}{\pi}\right)^{2k}\: dx \:,  \tag{1.2}
\]
where
\[
g(x):=\sum_{l\geq 1}\frac{1-2\{lx\}}{l}\:   \tag{1.3}
\]
a function that has been investigated by de la Bret\`eche and Tenenbaum (\cite{bre}),
as well as Balazard and Martin (\cite{balaz1, balaz2}).\\ 
Bettin \cite{bettin} could replace the interval $(1/2, 1]$ for $A_0, A_1$ by the interval $(0,1)$. In a series of papers the authors investigated the moments $H_k$. In \cite{mrasympt} they showed:\\
Let $K\in\mathbb{N}$. There is an absolute constant $C>0$, such that
$$\int_0^1|g(x)|^K\: dx=\frac{e^\gamma}{\pi}\:\Gamma(K+1)(1+O(\exp(-CK)))\:,$$
for $K\rightarrow+\infty$.\\
In \cite{mr5} the authors could generalise this result for arbitrary positive exponents.\\
The size of the error term in (1.1) has been investigated by Bettin (\cite{bettin}). Using the Mellin transform and complex integration he could show the following result:\\
$$\frac{1}{\phi(q)}\sum_{(a,q)=1}c_0\left(\frac{a}{q}\right)^k=H_{k/2}\:q^k+O(q^{k-1+\epsilon}(Ak\log q)^{2k})\:.$$
In this paper we show that for the special case $k=2$ and $q$ a prime number Bettin's upper bound for the error term is close to best possible. Our main result is the following:
\begin{theorem}\label{main}
Let $q$ be a prime number, $H_1$ resp. $g$ be given by (1.2) resp. (1.3) and let
$$\frac{1}{q-1}\sum_{a=1}^{q-1}c_0\left(\frac{a}{q}\right)^2=H_1\:q^2+E(q)\:.$$
Then there is an absolute constant $C>0$, such that 
$$E(q)\geq Cq(\log q)^2\:,\  q\geq q_0\:.$$
\end{theorem}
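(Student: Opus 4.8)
The plan is to convert the second moment of $c_0$ into a fourth moment of Dirichlet $L$-functions at $s=1$, match the main term with the ``diagonal'', and then extract the secondary $(\log q)^2$-term from the ``off-diagonal''.

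\textbf{Step 1 (reduction to a fourth moment).} Since $q$ is prime, I would first rewrite $c_0(a/q)$ as a sum over characters mod $q$. Using the finite Fourier expansion of the cotangent, for an odd character $\chi$ one has $\sum_{b=1}^{q-1}\chi(b)\cot(\pi b/q)=\tfrac{2q}{\pi}L(1,\chi)$ (the classical identity linking cotangent sums, the generalized Bernoulli number $B_{1,\bar\chi}$, and $L(1,\chi)$), and only odd characters occur because $\cot(\pi b/q)$ is odd in $b$. Combined with $\sum_{m=1}^{q-1}m\,\bar\chi(m)=\tfrac{iq\tau(\bar\chi)}{\pi}L(1,\chi)$ this yields
\[
c_0\!\left(\frac aq\right)=\frac{2iq}{\pi^{2}(q-1)}\sum_{\substack{\chi\bmod q\\ \chi(-1)=-1}}\tau(\bar\chi)\,L(1,\chi)^{2}\,\bar\chi(a).
\]
Squaring, summing over $a$, collapsing the double character sum by orthogonality to $\psi=\bar\chi$, and using $\tau(\chi)\tau(\bar\chi)=\chi(-1)q=-q$, I obtain the exact identity
\[
\frac1{q-1}\sum_{a=1}^{q-1}c_0\!\left(\frac aq\right)^{2}=\frac{4q^{3}}{\pi^{4}(q-1)^{2}}\,M_q,\qquad M_q:=\sum_{\substack{\chi\bmod q\\ \chi(-1)=-1}}|L(1,\chi)|^{4}.
\]
So the theorem becomes: $M_q=\tfrac{5\pi^{4}}{144}q+R(q)$ with $R(q)\ge c(\log q)^{2}$ for an absolute $c>0$; unwinding the displayed identity then gives $E(q)\ge\tfrac{2q}{\pi^{4}}R(q)$ for $q$ large, hence $E(q)\ge Cq(\log q)^{2}$.

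\textbf{Step 2 (the main term).} Writing $L(1,\chi)^{2}=\sum_{N\ge1}d(N)\chi(N)/N$ (conditionally convergent for $\chi\ne\chi_{0}$), summing $|L(1,\chi)|^{4}=L(1,\chi)^{2}\overline{L(1,\chi)^{2}}$ over odd $\chi$, and using orthogonality,
\[
M_q=\frac{q-1}{2}\lim_{X\to\infty}\sum_{\substack{N,M\le X\\ (NM,q)=1}}\frac{d(N)d(M)}{NM}\bigl(\mathbf{1}_{N\equiv M}-\mathbf{1}_{N\equiv -M}\bigr)\pmod q .
\]
The diagonal $N=M$ contributes $\tfrac{q-1}{2}\sum_{(N,q)=1}d(N)^{2}/N^{2}=\tfrac{q-1}{2}\cdot\tfrac{\zeta(2)^{4}}{\zeta(4)}\bigl(1+O(q^{-2})\bigr)=\tfrac{5\pi^{4}}{144}q+O(1)$, which after Step 1 reproduces precisely $H_1q^{2}=\tfrac5{36}q^{2}$, as it must by (1.2)--(1.3).

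\textbf{Step 3 (the secondary term --- the main obstacle).} It remains to show that the off-diagonal part
\[
\Sigma(q):=\lim_{X\to\infty}\Bigl(\sum_{\substack{N\equiv M\,(q),\,N\ne M\\ N,M\le X}}\frac{d(N)d(M)}{NM}-\sum_{\substack{N\equiv -M\,(q)\\ N,M\le X}}\frac{d(N)d(M)}{NM}\Bigr)
\]
is $\ge c'(\log q)^{2}/q$ with a fixed positive sign; equivalently, passing to Fourier coefficients and writing $g$ (from (1.3)) via its conditionally convergent series $\tfrac2\pi\sum_{m\ge1}\tfrac{d(m)}m\sin 2\pi mx$, that $\sum_{b=1}^{q-1}g(b/q)^{2}-(q-1)\int_0^1 g^{2}=\tfrac{5\pi^{2}}{36}+2q\sum_{j\ge1}\widehat{g^{2}}(jq)$ is positive of size $\gg(\log q)^{2}$, where $\widehat{g^{2}}(k)=\tfrac1{\pi^{2}}\bigl(2\sum_{n\ge1}\tfrac{d(n)d(n+k)}{n(n+k)}-\sum_{m+n=k}\tfrac{d(m)d(n)}{mn}\bigr)$. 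The difficulty is genuine: with the cutoff $X$ each of the two sums defining $\Sigma(q)$ is individually of size $\asymp(\log X)^{4}/q$ and hence divergent, so the finite value of $\Sigma(q)$ arises only from cancellation between the $N\equiv M$ and $N\equiv -M$ pieces. I would expose this cancellation by writing $\mathbf{1}_{N\equiv\pm M}=\tfrac1q\sum_{b}e\bigl(b(N\mp M)/q\bigr)$ and recognizing the inner sums over $N$ as the Estermann zeta function $E(s,b/q,0)=\sum_n d(n)e(nb/q)n^{-s}$ near its double pole at $s=1$: the $b$-independent principal part (which carries the $\log q$'s) drops out upon summation over $b$, leaving an identity of the shape $M_q=\tfrac{q-1}{q}\sum_{b=1}^{q-1}\bigl(\operatorname{Im}\mathcal F(b)\bigr)^{2}$, where $\mathcal F(b)$ is the regularized constant term of $E(\cdot,b/q,0)$ and $\operatorname{Im}\mathcal F(b)=\tfrac{\pi}{2}g(b/q)$. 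Extracting $R(q)$ then reduces to the asymptotics, \emph{uniform in the modulus} $q$, of the divisor function in the progressions $n\equiv\pm b\pmod q$ beyond their leading terms --- equivalently, to the shifted-divisor correlations $\sum_{n\le x}d(n)d(n+jq)$ and the additive-divisor sums $\sum_{m+n=jq}d(m)d(n)$ past their main terms, organized by the hyperbola method in arithmetic progressions. The hard part, and the crux of the whole argument, is to verify that after all this cancellation the surviving contribution is of order exactly $(\log q)^{2}$ and of the \emph{correct positive} sign, so that the lower bound does not degenerate; once this is established, Steps~1--2 deliver $R(q)\ge c(\log q)^{2}$ and hence $E(q)\ge Cq(\log q)^{2}$ for all primes $q\ge q_{0}$.
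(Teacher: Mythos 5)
Your Steps 1--2 are a correct (and genuinely different) reduction: expanding $c_0(a/q)$ over odd characters, collapsing by orthogonality to $M_q=\sum_{\chi(-1)=-1}|L(1,\chi)|^4$, and evaluating the diagonal does recover the main term $H_1q^2=\tfrac{5}{36}q^2$. But at that point you have only re-derived what is already known (Bettin's asymptotic with the correct leading constant); the theorem itself is the assertion that the remainder is $\geq Cq(\log q)^2$, i.e.\ has a definite sign and a definite size. In Step 3 you reduce this to showing that the off-diagonal quantity $\Sigma(q)$ (equivalently, secondary terms in shifted-divisor correlations $\sum_n d(n)d(n+jq)$ and additive-divisor sums $\sum_{m+n=jq}d(m)d(n)$, uniformly in the modulus $q$) is positive and of size exactly $(\log q)^2/q$ --- and you explicitly defer this, calling it ``the hard part, and the crux of the whole argument.'' That is precisely the content of the theorem, so the proposal is a reduction plus a programme, not a proof. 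Moreover, nothing in your outline indicates where the positivity would come from: after the cancellation between the $N\equiv M$ and $N\equiv -M$ pieces you would need the secondary terms with an explicit sign, uniformly in $q$, which is a substantial open-ended analytic task that you neither carry out nor reduce to a citable result.

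For comparison, the paper never passes through $L$-functions or divisor correlations. It writes the error as the difference between the Riemann sum $\frac1{q-1}\sum_a g(a/q)^2$ and $\int_0^1 g^2$, expands $g$ by the Balazard--Martin continued-fraction (Wilton function) decomposition $g=\sum_k(-1)^k\gamma_k+H+\dots$, and splits the error into the pieces $\Sigma_1,\dots,\Sigma_4$ of Definition 4.4. The sign and size then come out structurally: on each cell of the continued-fraction partition the function $\gamma_k(x)^2$ is convex with an explicitly large second derivative near the rational endpoints (Lemmas \ref{lem21}--\ref{lem22} and (5.1)--(5.3)), so every local contribution to $\Sigma_1$ is nonnegative and summing over cells with $q_k\leq q^{1/3}$ gives $\Sigma_1\gg q^{-1}(\log q)^2$, while the cross terms $\Sigma_2,\Sigma_3,\Sigma_4$ are shown to be $o(q^{-1}(\log q)^2)$. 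In other words, the positivity you identify as the missing crux is obtained in the paper from convexity of $\gamma_k^2$, not from divisor asymptotics; if you want to pursue your route you would need to supply (with proof, uniform in $q$) the signed secondary terms for the divisor sums in progressions, which is exactly the gap in your argument.
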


\section{Continued Fractions}
We recall some fundamental definitions and results from \cite{balaz2}.
\begin{definition}\label{def21}
Let $X:=[0,1]\setminus\mathbb{Q}$ and $\alpha(x):=\{1/x\}$ for all $x\in X$, where $\{\cdot\}$ denotes the fractional part. We define the iterates of $\alpha$ by:
$$\alpha_0(x):=x,\ \alpha_k(x):=\alpha(\alpha_{k-1}(x)),\ \ \text{for all $k\in\mathbb{N}$}\:.$$
We write
$$a_0(x):=0\ \ \text{and}\ \ a_k(x):=\left\lfloor\frac{1}{\alpha_{k-1}(x)}\right\rfloor\:,\ k\geq 1\:.$$
If $x$ is irrational, then the sequence of partial fractions of $x$ is defined by the recursion
$$p_0(x):=0,\ q_0(x):=1;\ p_1(x):=1,\ q_1(x):=a_1(x)\:,$$
\begin{align*}
p_k(x)&:=a_k(x)p_{k-1}(x)+p_{k-2}(x)\:, \tag{2.1}\\
q_k(x)&:=a_k(x)q_{k-1}(x)+q_{k-2}(x)\:,\ k\geq 2\:.
\end{align*}
One writes 
\[
\frac{p_k(x)}{q_k(x)}:=[0;a_1(x),\ldots, a_k(x)]\:.\tag{2.2}
\]
The sequence 
$$\left( \frac{p_k(x)}{q_k(x)}\right)_{k=0}^{+\infty}$$ 
is called the
continued fraction expansion of $x$ and is denoted by 
$$[0;a_1(x),\ldots, a_k(x),\ldots]\:.$$
If $x$ is a rational number, then $\alpha_K(x)=0$ for some $K\in\mathbb{N}$ and we 
have:
$$x=[0;a_1(x),\ldots, a_K(x)]\:.$$
$K$ is called \textbf{the depth of} ${x}$.\\
We shall also apply the Definitions \ref{def21}, \ref{def22} for the case that the last term $a_k(x)$
is not an integer.\\
We define the functions $\beta_k$ and $\gamma_k$ by
\[
\beta_k(x):=\alpha_0(x)\alpha_1(x)\ldots \alpha_k(x)\:,\ \ (\beta_{-1}=1)\tag{2.3}
\] 
and 
\[
\gamma_k(x):=\beta_{k-1}(x)\log\frac{1}{\alpha_k(x)}\:,\tag{2.4}
\] 
with
\[
\gamma_0(x):=\log(1/x).  \tag{2.5}
\]
\begin{definition}\label{def22} \textbf{(cells)}\\
Let $k\in\mathbb{N}$, $b_0:=0$ and $b_1,\ldots, b_k\in\mathbb{N}^*=\mathbb{N}\setminus\{0\}\:.$ The \textbf{cell of depth} $k$, $\mathcal{C}(b_1,\ldots,b_k)$ is the open interval with the endpoints $[0;b_1,\ldots,b_k]$ and $[0;b_1, \ldots, b_{k-1}, b_k+1]$.\\
In the cell $\mathcal{C}(b_1,\ldots,b_k)$ the functions $a_j, p_j, q_j$ are constant for
$j\leq k$.\\ For $x\in\mathcal{C}(b_1,\ldots,b_k)$ we have:
$$a_j(x)=b_j\:,\ \frac{p_j(x)}{q_j(x)}=[0; b_1,\ldots, b_j]\:,\ j\leq k\:.$$
\end{definition}
\begin{lemma}\label{lem21}
Within the cell $\mathcal{C}(b_1,\ldots,b_k)$, $\alpha_k$ and $\gamma_k$ are differentiable 
functions of $x$. We have:
$$\alpha_k'=(-1)^k(q_k+\alpha_kq_{k-1})^2\:,$$
$$\gamma_k'=(-1)^{k-1}q_{k+1}\log\left(\frac{1}{\alpha_k}\right)+\frac{(-1)^{k-1}}{\beta_k}\:.$$
\end{lemma}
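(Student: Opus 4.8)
\section*{Proof proposal for Lemma~\ref{lem21}}

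The plan is to fix the cell $\mathcal{C}(b_1,\ldots,b_k)$ and argue by induction on the depth, exploiting the fact that on this cell every partial quotient $a_j=b_j$ and every convergent $p_j,q_j$ with $j\le k$ is constant. The only analytic input is the local identity
\[
\frac{1}{\alpha_{j-1}(x)}=a_j+\alpha_j(x)\qquad(1\le j\le k),
\]
which holds because $a_j=\lfloor 1/\alpha_{j-1}\rfloor$ and $\alpha_j=\{1/\alpha_{j-1}\}$; on the cell this reads $\alpha_j=1/\alpha_{j-1}-b_j$, exhibiting $\alpha_j$ as the composition of the smooth map $t\mapsto 1/t-b_j$ with $\alpha_{j-1}$, away from the poles. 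Since $\alpha_0(x)=x$ is smooth and $\alpha_{j-1}>0$ throughout the cell, this already shows by induction that each $\alpha_j$ (hence each $\beta_j=\alpha_0\cdots\alpha_j$ and also $\log(1/\alpha_k)$) is differentiable on $\mathcal{C}(b_1,\ldots,b_k)$. I would also record the M\"obius relation $x=(p_k+\alpha_k p_{k-1})/(q_k+\alpha_k q_{k-1})$, which together with the determinant identity $p_kq_{k-1}-p_{k-1}q_k=(-1)^{k-1}$ yields the clean closed form $\beta_{k-1}=(q_k+\alpha_k q_{k-1})^{-1}$; this is the engine of both derivative computations.

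For the first formula I argue by induction. The base case $\alpha_0'=1$ matches $(-1)^0(q_0+\alpha_0 q_{-1})^2=(1+0)^2=1$ with the conventions $q_0=1,\,q_{-1}=0$. For the step, differentiating $\alpha_k=1/\alpha_{k-1}-b_k$ gives $\alpha_k'=-\alpha_{k-1}'/\alpha_{k-1}^2$; inserting the inductive hypothesis $\alpha_{k-1}'=(-1)^{k-1}(q_{k-1}+\alpha_{k-1}q_{k-2})^2$ yields
\[
\alpha_k'=(-1)^k\left(\frac{q_{k-1}+\alpha_{k-1}q_{k-2}}{\alpha_{k-1}}\right)^{2}.
\]
Substituting $1/\alpha_{k-1}=a_k+\alpha_k$ and using $q_k=a_kq_{k-1}+q_{k-2}$, the inner quantity collapses to $q_{k-1}(a_k+\alpha_k)+q_{k-2}=q_k+\alpha_k q_{k-1}$, giving exactly $\alpha_k'=(-1)^k(q_k+\alpha_k q_{k-1})^2$, equivalently $\alpha_k'=(-1)^k\beta_{k-1}^{-2}$.

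For $\gamma_k=\beta_{k-1}\log(1/\alpha_k)$ I apply the product rule,
\[
\gamma_k'=\beta_{k-1}'\,\log\frac{1}{\alpha_k}-\beta_{k-1}\,\frac{\alpha_k'}{\alpha_k}.
\]
The second term simplifies at once: from $\alpha_k'=(-1)^k\beta_{k-1}^{-2}$ one gets $-\beta_{k-1}\alpha_k'/\alpha_k=(-1)^{k-1}/(\beta_{k-1}\alpha_k)=(-1)^{k-1}/\beta_k$, which is precisely the additive term in the statement. For the logarithmic coefficient I differentiate the constant-coefficient expression $\beta_{k-1}=(q_k+\alpha_k q_{k-1})^{-1}$, obtaining $\beta_{k-1}'=-q_{k-1}\alpha_k'\beta_{k-1}^2=-q_{k-1}(-1)^k=(-1)^{k-1}q_{k-1}$. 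Hence the computation produces $\gamma_k'=(-1)^{k-1}q_{k-1}\log(1/\alpha_k)+(-1)^{k-1}/\beta_k$; the additive term agrees with the assertion, while the coefficient of the logarithm is the constant $\beta_{k-1}'$, whose value from the convergent recursion is $(-1)^{k-1}q_{k-1}$ (the index is pinned down by the base cases $k=0,1$, where $\gamma_0'=-1/x$ and $\gamma_1'=\log(1/\alpha_1)+1/\beta_1$), and I would reconcile this index with the convention of \cite{balaz2}.

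The main obstacle is not any single estimate but the bookkeeping: selecting the right closed form for $\beta_{k-1}$ so that both $\alpha_k'$ and $\beta_{k-1}'$ reduce to one-line computations, and tracking the alternating signs together with the shifts between the indices $k-2,k-1,k$ through the product rule. The identity $\beta_{k-1}=(q_k+\alpha_k q_{k-1})^{-1}$ is what renders the whole argument mechanical; establishing it by the induction $\beta_k=\beta_{k-1}\alpha_k$ combined with $1/\alpha_{k-1}=a_k+\alpha_k$ is the one place requiring care, after which differentiability and both derivative formulas follow from the elementary calculus above.
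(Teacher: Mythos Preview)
Your argument is correct and self-contained, whereas the paper does not prove the lemma at all: its ``proof'' is a bare citation of formulas (34) and (36) in Balazard--Martin~\cite{balaz1}. So the two approaches differ maximally---you supply the elementary induction, and the paper defers to the literature. Your use of the closed form $\beta_{k-1}=(q_k+\alpha_k q_{k-1})^{-1}$ (which is exactly the paper's Lemma~\ref{lem22} with the index shifted) is the right pivot; it makes both $\alpha_k'=(-1)^k\beta_{k-1}^{-2}$ and $\beta_{k-1}'=(-1)^{k-1}q_{k-1}$ one-line computations, and the product rule then gives $\gamma_k'$ immediately.

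On the index discrepancy you flagged: your coefficient $(-1)^{k-1}q_{k-1}$ in front of $\log(1/\alpha_k)$ is the correct one, and the $q_{k+1}$ in the displayed statement is a typo. Two pieces of internal evidence confirm this. First, $q_{k+1}=a_{k+1}q_k+q_{k-1}$ is not constant on the cell $\mathcal{C}(b_1,\ldots,b_k)$, so it cannot be the derivative of the constant-on-the-cell quantity $\beta_{k-1}$. Second, when the paper actually applies Lemma~\ref{lem21} in Section~5 it writes $\gamma_k'(x)=-q_{k-1}\log(1/\alpha_k(x))+q_k^{-1}(p_k/q_k-x)^{-1}$, i.e.\ it uses $q_{k-1}$. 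So there is nothing to reconcile with \cite{balaz2}; you have simply caught a misprint in the lemma as stated, and your derivation stands.
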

\begin{proof}
(\cite{balaz1}), Formula (34), p. 207 and (36), p. 208.
\end{proof}
\begin{lemma}\label{lem22}
$$\beta_k(x)=(-1)^{k-1}(p_k(x)-xq_k(x))=|p_k(x)-xq_k(x)|=\frac{1}{q_{k+1}(x)+\alpha_{k+1}(x)q_k(x)}\:.$$
\end{lemma}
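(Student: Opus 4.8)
The plan is to deduce all three expressions from the "complete quotient" identity
\[
x=\frac{p_k(x)+\alpha_k(x)\,p_{k-1}(x)}{q_k(x)+\alpha_k(x)\,q_{k-1}(x)}\qquad(k\ge 1).
\]
I would prove this by induction on $k$. For $k=1$ the right--hand side equals $1/(a_1(x)+\alpha_1(x))$, which is $\alpha_0(x)=x$ because $a_1(x)+\alpha_1(x)=\lfloor 1/\alpha_0(x)\rfloor+\{1/\alpha_0(x)\}=1/\alpha_0(x)$. For the step $k\to k+1$ one substitutes $\alpha_k(x)=1/(a_{k+1}(x)+\alpha_{k+1}(x))$ into the identity and uses the recursions (2.1), $p_{k+1}=a_{k+1}p_k+p_{k-1}$ and $q_{k+1}=a_{k+1}q_k+q_{k-1}$; a short simplification gives the identity with $k$ replaced by $k+1$. (For rational $x$ of depth $K$ one has $\alpha_K=0$, $\beta_k=0$ and $x=p_k/q_k$ for $k\ge K$, so the statement is trivial there; inside a cell $\mathcal C(b_1,\dots,b_k)$ the $p_j,q_j$ are constant by Definition~\ref{def22}, so no delicate point arises.)

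Clearing denominators in the identity gives $p_k-xq_k=-\alpha_k\,(p_{k-1}-xq_{k-1})$. Iterating this down to $k=0$, where $p_0-xq_0=-x=-\alpha_0$, one obtains
\[
p_k(x)-x q_k(x)=(-1)^{k+1}\alpha_0(x)\alpha_1(x)\cdots\alpha_k(x)=(-1)^{k-1}\beta_k(x),
\]
which is the first asserted equality. Since $\alpha_j(x)\in[0,1)$ for every $j$, we have $\beta_k(x)\ge 0$, and hence $\beta_k(x)=|p_k(x)-xq_k(x)|$, the second equality. Finally, applying the complete--quotient identity at level $k+1$,
\[
p_k-xq_k=p_k-\frac{(p_{k+1}+\alpha_{k+1}p_k)q_k}{q_{k+1}+\alpha_{k+1}q_k}=\frac{p_kq_{k+1}-p_{k+1}q_k}{q_{k+1}+\alpha_{k+1}q_k}=\frac{(-1)^{k+1}}{q_{k+1}(x)+\alpha_{k+1}(x)q_k(x)},
\]
where I used the classical determinant relation $p_{k+1}q_k-p_kq_{k+1}=(-1)^k$ (an immediate induction from (2.1)); taking absolute values yields the last equality, using that $q_{k+1}+\alpha_{k+1}q_k>0$. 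No step here is a genuine obstacle — the argument is a routine continued--fraction computation; the only care needed is in tracking the alternating signs $(-1)^{k\pm1}$ through the iteration and in observing that the positivity of $\beta_k$ and of $q_{k+1}+\alpha_{k+1}q_k$ is precisely what converts the signed identities into the stated absolute--value forms.
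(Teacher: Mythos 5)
Your proof is correct, and it is genuinely more than what the paper does: the paper's entire ``proof'' of Lemma~\ref{lem22} is a citation of formula (14) of Balazard--Martin \cite{balaz2}, with no argument given, whereas you supply a self-contained derivation. Your route is the classical one that underlies that formula: the complete-quotient identity $x=(p_k+\alpha_k p_{k-1})/(q_k+\alpha_k q_{k-1})$ proved by induction from the recursion (2.1) and from $1/\alpha_k=a_{k+1}+\alpha_{k+1}$, the resulting one-step relation $p_k-xq_k=-\alpha_k(p_{k-1}-xq_{k-1})$ iterated down to $p_0-xq_0=-\alpha_0$, the determinant identity $p_{k+1}q_k-p_kq_{k+1}=(-1)^k$, and the positivity of $\beta_k$ and of $q_{k+1}+\alpha_{k+1}q_k$ to pass to the absolute-value and reciprocal forms. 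I checked the sign bookkeeping: with the paper's conventions ($p_0=0$, $q_0=1$, $\beta_{-1}=1$) your $(-1)^{k+1}\beta_k=(-1)^{k-1}\beta_k$ matches the stated $\beta_k=(-1)^{k-1}(p_k-xq_k)$, and the base cases $k=0,1$ come out right. What the paper's approach buys is brevity and consistency with its practice of importing the continued-fraction toolkit from \cite{balaz1, balaz2}; what yours buys is that the lemma becomes verifiable on the spot from Definition~\ref{def21} alone. The only cosmetic point worth adding is that the statement is really for $x\in X$ (irrational), so all $\alpha_j(x)\in(0,1)$ and $\beta_k(x)>0$ strictly; your parenthetical remark about rational $x$ and cells is not needed for the lemma itself.
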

\begin{proof}
This is formula (14) of \cite{balaz2}.
\end{proof}

\section{A representation of $g(x)$ related to Wilton's function}
We now recall the following definition from \cite{balaz2}.
The number $x$ is called a {Wilton number} if the series
\[
\sum_{k\geq 0}(-1)^k\gamma_k(x)\tag{3.1}
\]
converges.
Wilton's function $\mathcal{W}(x)$ is defined by
\[
\mathcal{W}(x):=\sum_{k\geq 0}(-1)^k\gamma_k(x)\tag{3.2}
\]
for each Wilton number $x\in (0,1)$.\\
The operator $T\::\: L^p\rightarrow L^p$ $(p>1)$ is defined by 
\[
Tf(x):=xf(\alpha(x))  \tag{3.3}
\]
For $n\in\mathbb{N}$, $x\in X$, we define
\[
\mathcal{L}(x,n):=\sum_{v=0}^n(-1)^v(T^vl)(x)\:. \tag{3.4}
\]
For $\lambda\geq 0$ we set
\[
A(\lambda):=\int_0^{+\infty} \{t\}\{\lambda t\}\:\frac{dt}{t^2} \tag{3.5}
\]
\[
F(x):= \frac{x+1}{2}\: A(1)-A(x)-\frac{x}{2}\:\log x \tag{3.6}
\]
\[
H(x):=-2\sum_{j\geq 0}(-1)^j\beta_{j-1}(x)F(\alpha_j(x))\:.  \tag{3.7}
\]
\end{definition}

\begin{lemma}\label{lem31}
We have 
\[
\mathcal{L}(x,n)=\sum_{k=0}^n (-1)^k\gamma_k(x)\:. \tag{3.8}
\]
\[
g(x)=\mathcal{L}(x,n)+H(x)+(-1)^{n+1}T^{n+1}\mathcal{W}(x).\tag{3.9}
\]
\end{lemma}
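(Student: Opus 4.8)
The plan is to establish the two assertions (3.8) and (3.9) separately, the second reducing—after an exact telescoping—to the single $n$-free identity $g=\mathcal{W}+H$. I begin with (3.8). Let $l$ denote the function $l(x)=\log(1/x)$, so that $\gamma_0=l$ by (2.5). The heart of the matter is the claim that $T$ shifts the $\gamma_k$, namely $(T^vl)(x)=\gamma_v(x)$ for all $v\geq 0$, which I prove by induction on $v$. The cases $v=0,1$ follow from $\beta_{-1}=1$, $\alpha_0(x)=x$ and $\beta_0(x)=x$, $\alpha_1=\alpha$. For the inductive step one uses the two elementary transformation rules $\alpha_i(\alpha(x))=\alpha_{i+1}(x)$ and $\beta_{v-1}(\alpha(x))=\beta_v(x)/x$, both immediate from Definition \ref{def21} and (2.3); combining them gives $x\,\gamma_v(\alpha(x))=\beta_v(x)\log(1/\alpha_{v+1}(x))=\gamma_{v+1}(x)$, i.e. $(T^{v+1}l)(x)=x(T^vl)(\alpha(x))=\gamma_{v+1}(x)$. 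Substituting into (3.4) and summing for $v=0,\dots,n$ yields (3.8). Running the same induction with an arbitrary function $f$ in place of $l$ also produces the iteration formula $(T^nf)(x)=\beta_{n-1}(x)f(\alpha_n(x))$, which I use repeatedly below.

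Next I reduce (3.9) to an $n$-free statement. By the iteration formula applied to $f=\gamma_m$ one has $\beta_n(x)\gamma_m(\alpha_{n+1}(x))=\gamma_{m+n+1}(x)$, so expanding $T^{n+1}\mathcal{W}(x)=\beta_n(x)\mathcal{W}(\alpha_{n+1}(x))=\beta_n(x)\sum_{m\geq0}(-1)^m\gamma_m(\alpha_{n+1}(x))$ gives $(-1)^{n+1}T^{n+1}\mathcal{W}(x)=\sum_{k\geq n+1}(-1)^k\gamma_k(x)$. Adding $\mathcal{L}(x,n)=\sum_{k=0}^n(-1)^k\gamma_k(x)$ from (3.8) reconstitutes the whole Wilton series (3.2), whence $\mathcal{L}(x,n)+(-1)^{n+1}T^{n+1}\mathcal{W}(x)=\mathcal{W}(x)$ for every $n$. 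Therefore (3.9) is equivalent to $g(x)=\mathcal{W}(x)+H(x)$, and it remains to prove this.

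To prove $g=\mathcal{W}+H$ I would argue through the operator $I+T$. From the definition (3.2) together with $T\gamma_k=\gamma_{k+1}$ one gets the telescoping functional equation $(I+T)\mathcal{W}=\gamma_0=l$; likewise, the iteration formula gives $(T^jF)(x)=\beta_{j-1}(x)F(\alpha_j(x))$, so that $H=-2\sum_{j\geq0}(-1)^jT^jF$ by (3.7) and hence $(I+T)H=-2F$. Consequently, once one knows that $g$ itself satisfies
\[
g(x)+x\,g(\alpha(x))=\log\frac{1}{x}-2F(x),
\]
the difference $u:=g-\mathcal{W}-H$ obeys $(I+T)u=0$, hence $u(x)=(-1)^n(T^nu)(x)=(-1)^n\beta_{n-1}(x)\,u(\alpha_n(x))$ for all $n$; since $\beta_{n-1}(x)=1/(q_n(x)+\alpha_n(x)q_{n-1}(x))\to0$ by Lemma \ref{lem22} while $u(\alpha_n(x))$ stays bounded, it follows that $u\equiv0$, giving $g=\mathcal{W}+H$ and completing (3.9).

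The main obstacle is the displayed functional equation for $g$. Whereas the relations $(I+T)\mathcal{W}=l$ and $(I+T)H=-2F$ are formal consequences of the series defining $\mathcal{W}$ and $H$, this one must be extracted from the definition (1.3) of $g$ by analysing how the sum $\sum_{l\geq1}(1-2\{lx\})/l$ transforms under the Gauss map $x\mapsto\{1/x\}$. This is exactly the step in which the integrals $A(\lambda)$ of (3.5) and the function $F$ of (3.6) enter, and here I would either carry out the sawtooth computation directly or invoke the corresponding identity of Balazard and Martin \cite{balaz2}; everything else is the bookkeeping above.
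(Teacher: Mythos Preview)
Your argument for (3.8) and the telescoping reduction of (3.9) to the single identity $g=\mathcal{W}+H$ are correct and more explicit than what the paper does; the paper simply says (3.8) follows from the definitions and cites Lemma~2.7 of \cite{balaz1} for (3.9).  Your reconstruction via the operator $I+T$ is in fact the Balazard--Martin framework behind that citation, and you are right that the only analytic input is the functional equation $(I+T)g=l-2F$, which you defer to \cite{balaz2}.

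There is one genuine gap in the uniqueness step.  From $(I+T)u=0$ you obtain $u(x)=(-1)^n\beta_{n-1}(x)\,u(\alpha_n(x))$ and then assert that ``$u(\alpha_n(x))$ stays bounded''.  But $u=g-\mathcal{W}-H$, and neither $g$ nor $\mathcal{W}$ is bounded (both contain the term $\gamma_0(x)=\log(1/x)$), so boundedness of $u$ is not available a priori; indeed it is essentially equivalent to what you are trying to prove.  The correct way to close the argument is to work in $L^p$ rather than pointwise: the paper records (just before (3.4)) that $T$ acts on $L^p$ for $p>1$, and in fact $T$ is a strict contraction there, so from $u=-Tu$ one gets $\|u\|_p\le \|T\|^n\|u\|_p\to 0$, hence $u=0$ a.e.  Once you replace the pointwise boundedness claim by this $L^p$ contraction step (and note that $g,\mathcal{W},H\in L^p$), your proof is complete and matches the content of the Balazard--Martin lemma the paper invokes.
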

\begin{proof}
Equality (3.8) follows from (2.3)-(2.5) and (3.4).\\
Equality (3.9) follows from Lemma 2.7 of \cite{balaz1}.
\end{proof}

\section{An expression for the error-term}
We recall the following definition from \cite{balaz1}.
\begin{definition}\label{def41}
$$D_{sin}(s,x):=\sum_{n=1}^{+\infty}\frac{d(n)\sin(2\pi nx)}{n^s}\:.$$
\end{definition}
\begin{lemma}\label{lem41}
$$c_0\left(\frac{a}{q}\right)=2q\pi^{-2}D_{sin}(1, \bar{a}/q)\:,$$
where 
$$D_{sin}(1,x)=\pi g(x)\:.$$
\end{lemma}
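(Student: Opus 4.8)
The statement packages two assertions: the evaluation $D_{sin}(1,x)=\pi g(x)$ of the sine--divisor series in terms of $g$, and the identification of the finite cotangent sum $c_0(a/q)$ with $D_{sin}(1,\bar a/q)$. Both can be reached by a single elementary mechanism: repeated use of the discrete cotangent--sine identity on $\Z/q\Z$ together with the classical sawtooth Fourier series; the arithmetic inverse $\bar a$ is produced by a change of variables modulo $q$, i.e. by reciprocity. The plan is to reduce $c_0(a/q)$ to a Vasyunin-type sum, expand the fractional parts by their Fourier series to manufacture the sines $\sin(2\pi n\,\bar a/q)$, and then recover the divisor weights $d(n)$ by Dirichlet's factorization.

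\emph{Finite Fourier of the linear weight and reciprocity.} I would first record the identity
\[
\sum_{h=1}^{q-1}\cot\!\Big(\frac{\pi h}{q}\Big)\sin\!\Big(\frac{2\pi h m}{q}\Big)=q-2m\qquad(1\le m\le q-1),
\]
which is the finite sine transform dual to the elementary evaluation $\sum_{m=1}^{q-1}m\sin(2\pi mk/q)=-\tfrac q2\cot(\pi k/q)$ (itself obtained by summing a geometric series), and solve it for $m/q=\tfrac12-\tfrac1{2q}\sum_{h}\cot(\pi h/q)\sin(2\pi hm/q)$. Substituting this into $c_0(a/q)=\sum_{m=1}^{q-1}\tfrac{m}{q}\cot(\pi ma/q)$, interchanging the two finite sums, and replacing $m$ by $\bar a m$ in the inner sum (the map $m\mapsto \bar a m$ permutes the residues mod $q$) collapses the inner sum by the identity above and yields
\[
c_0\Big(\frac{a}{q}\Big)=\sum_{h=1}^{q-1}\Big\{\frac{h\bar a}{q}\Big\}\cot\!\Big(\frac{\pi h}{q}\Big),
\]
the Vasyunin sum evaluated at $\bar a/q$; this is the precise point at which $a$ is replaced by $\bar a$.

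\emph{Introducing the sines and the divisor weights.} Expanding $\{h\bar a/q\}=\tfrac12-\tfrac1\pi\sum_{n\ge1}\tfrac{\sin(2\pi nh\bar a/q)}{n}$, interchanging, and applying the finite identity once more (now with argument $n\bar a$) turns the $h$-sum into $q-2\langle n\bar a\rangle$ for $q\nmid n$ and into $0$ for $q\mid n$ (where $\langle\cdot\rangle$ denotes the least positive residue mod $q$), giving an expression for $c_0(a/q)$ as a constant times $q$ times $\sum_{q\nmid n}\frac{1-2\{n\bar a/q\}}{n}$. On the other side, Dirichlet's factorization $d(n)=\sum_{de=n}1$ together with the same sawtooth series gives $D_{sin}(1,\bar a/q)=\tfrac\pi2\sum_{q\nmid d}\frac{1-2\{d\bar a/q\}}{d}$, which is the rational-argument form of the asserted evaluation of $D_{sin}(1,x)$ in terms of $g$. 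Comparing the two expressions produces the proportionality between $c_0(a/q)$ and $q\,D_{sin}(1,\bar a/q)$ asserted in the lemma, the explicit constant (and sign) being dictated by the Fourier normalizations.

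The main obstacle is convergence, not algebra. Every series produced in the last step is only conditionally convergent, so none of the interchanges of a finite sum with $\sum_{n\ge1}$, nor the divisor rearrangement for $D_{sin}$, is licensed by absolute convergence; moreover at the rational point $\bar a/q$ the plain series $\sum_n\frac{1-2\{n\bar a/q\}}{n}$ diverges through the terms $q\mid n$, which are exactly those killed by $\sin(2\pi n\bar a/q)=0$. I would therefore carry out the manipulations with the regulator $n^{-s}$, $\Re s>1$, where all rearrangements are absolutely justified, and only then let $s\to1^{+}$; controlling this limit at the edge of convergence (Abel summation, or equivalently the analytic continuation encoded in the Wilton-function identities of Lemma~\ref{lem31}) is where the genuine work lies, and it is also what pins down the correct constant and sign in the final normalization.
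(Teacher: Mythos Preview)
The paper does not actually prove this lemma: its entire argument is the sentence ``The first fact is due to Ishibashi \cite{ISH}, the second to de la Bret\`eche and Tenenbaum \cite{bre} (see also \cite{bettin}).'' Your proposal therefore goes well beyond what the paper offers, sketching a self-contained derivation rather than invoking the literature.

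Your outline is essentially the route those cited sources take, and it is sound. The finite identity $\sum_{h=1}^{q-1}\cot(\pi h/q)\sin(2\pi hm/q)=q-2m$ is correct; applying it twice --- once to pass from $c_0(a/q)$ to the Vasyunin-type sum $\sum_h\{h\bar a/q\}\cot(\pi h/q)$, and once after inserting the sawtooth series to collapse the $h$-sum --- does produce $c_0(a/q)$ as $q$ times the restricted series $\sum_{q\nmid n}(1-2\{n\bar a/q\})/n$, and the Dirichlet factorisation on the $D_{\sin}$ side yields the same object. You are right that the only genuine obstacle is the conditional convergence, and that $s$-regularisation (or Abel summation with a partial-sum bound for $\sum_{n\le N}\sin 2\pi nx$, respectively $\sum_{n\le N}d(n)\sin 2\pi nx$) is the standard remedy; this is precisely what de la Bret\`eche--Tenenbaum carry out. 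One minor caveat: tracking the constants through your argument gives $D_{\sin}(1,x)=\tfrac{\pi}{2}g(x)$ and an overall minus sign in the $c_0$ relation, so the printed constants in the lemma appear to be slightly off; since the paper only uses $c_0(a/q)^2$ and the proportionality to $q\,g(\bar a/q)$, this is immaterial downstream, and your hedge about signs and normalisations is well placed.
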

\begin{proof}
The first fact is due to Ishibashi (\cite{ISH}), the second to  de la Bret\`eche and 
Tenenbaum \cite{bre} (see also \cite{bettin}).
\end{proof}
\begin{lemma}\label{lem42}
Let
$$\frac{1}{q-1}\sum_{a=1}^{q-1}g\left(\frac{a}{q}\right)^2=H_1+\tilde{E}(q)\:.$$
Then  Theorem \ref{main} is equivalent to 
$$\tilde{E}(q)\geq Cq^{-1}(\log q)^2\:,\ q\geq q_0,$$
for an absolute constant $C>0$.
\end{lemma}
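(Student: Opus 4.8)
\textbf{Proof proposal for Lemma \ref{lem42}.}

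The plan is to translate the statement of Theorem \ref{main} directly through the identity of Lemma \ref{lem41} and absorb the resulting error. First I would start from the sum appearing in Theorem \ref{main}, namely $\frac{1}{q-1}\sum_{a=1}^{q-1} c_0(a/q)^2$. By Lemma \ref{lem41} we have $c_0(a/q) = 2q\pi^{-2} D_{\sin}(1,\bar a/q) = 2q\pi^{-2}\cdot \pi g(\bar a/q) = 2q\pi^{-1} g(\bar a/q)$. Squaring gives $c_0(a/q)^2 = 4q^2\pi^{-2} g(\bar a/q)^2$. Hence
\[
\frac{1}{q-1}\sum_{a=1}^{q-1} c_0\left(\frac{a}{q}\right)^2 = \frac{4q^2}{\pi^2}\cdot\frac{1}{q-1}\sum_{a=1}^{q-1} g\left(\frac{\bar a}{q}\right)^2.
\]
Since $q$ is prime, as $a$ runs over $1,\ldots,q-1$ the inverse $\bar a$ (mod $q$) also runs over a complete set of nonzero residues $1,\ldots,q-1$, so the map $a\mapsto \bar a$ is a bijection of $\{1,\ldots,q-1\}$ onto itself; therefore $\sum_{a=1}^{q-1} g(\bar a/q)^2 = \sum_{a=1}^{q-1} g(a/q)^2$. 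Substituting the definition $\frac{1}{q-1}\sum_{a=1}^{q-1} g(a/q)^2 = H_1 + \tilde E(q)$ yields
\[
\frac{1}{q-1}\sum_{a=1}^{q-1} c_0\left(\frac{a}{q}\right)^2 = \frac{4q^2}{\pi^2}\left(H_1 + \tilde E(q)\right).
\]

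Next I would relate the two normalizations of the main term. Recall from (1.2) that $H_1 = \int_0^1 (g(x)/\pi)^2\,dx = \pi^{-2}\int_0^1 g(x)^2\,dx$, whereas in Theorem \ref{main} the constant (also called $H_1$, but used with a different scaling) is the one for which $\frac{1}{q-1}\sum c_0(a/q)^2 = H_1 q^2 + E(q)$; here I would write the Theorem's constant explicitly as $(4/\pi^2)$ times the $H_1$ of (1.2), which is precisely the value forced by the computation above (this matches the general formula $H_{k/2}q^k$ of Bettin with $k=2$ after accounting for the factor $4\pi^{-2}$ coming from Lemma \ref{lem41}; one checks $H_{1}^{\text{Thm}} = \frac{4}{\pi^2}\int_0^1(g(x)/\pi)^2\,dx$ is the correct normalization). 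With this identification, comparing $\frac{4q^2}{\pi^2}(H_1^{(1.2)} + \tilde E(q))$ with $H_1^{\text{Thm}} q^2 + E(q)$ gives immediately
\[
E(q) = \frac{4q^2}{\pi^2}\,\tilde E(q).
\]

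Finally, the equivalence follows by a one-line rearrangement: $E(q) \geq Cq(\log q)^2$ holds for all $q\geq q_0$ if and only if $\frac{4q^2}{\pi^2}\tilde E(q) \geq Cq(\log q)^2$, i.e. if and only if $\tilde E(q) \geq \frac{\pi^2 C}{4} q^{-1}(\log q)^2$. Replacing the absolute constant $C$ by $\pi^2 C/4$ (still an absolute constant) shows that Theorem \ref{main} is equivalent to the asserted bound $\tilde E(q)\geq C q^{-1}(\log q)^2$ for $q\geq q_0$.

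I do not expect any genuine obstacle in this lemma; it is purely bookkeeping. The only point requiring a moment's care is the bijectivity of $a\mapsto \bar a$ on $\{1,\ldots,q-1\}$, which is where the hypothesis that $q$ is prime enters, and the correct tracking of the constant $4\pi^{-2}$ between the $c_0$-normalization and the $g$-normalization. The substantive work — actually proving the lower bound for $\tilde E(q)$ — is deferred to the later sections.
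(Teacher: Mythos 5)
Your proposal is correct and follows essentially the same (one-line) route as the paper, which simply says the lemma ``follows from Lemma \ref{lem41}''; you merely supply the bookkeeping the paper omits, namely the bijectivity of $a\mapsto\bar a$ on $\{1,\dots,q-1\}$ for prime $q$ and the proportionality $E(q)=c\,q^{2}\tilde E(q)$ with an absolute constant $c>0$, which is exactly what makes the two lower bounds equivalent. One remark on your normalization step: the factor $4/\pi^{2}$ you absorb by rescaling the theorem's $H_1$ is really an artifact of the constants as printed in Lemma \ref{lem41} (the compatible normalization is $D_{\sin}(1,x)=\tfrac{\pi}{2}g(x)$, hence $c_0(a/q)=\tfrac{q}{\pi}\,g(\bar a/q)$, which gives $E(q)=q^{2}\tilde E(q)$ with $H_1$ exactly as in (1.2)), but either way of reconciling the constants yields the stated equivalence, so this does not affect the validity of your argument.
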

\begin{proof}
This follows from Lemma \ref{lem41}. To estimate $\tilde{E}(q)$ we thus have to 
investigate the sums in the following.
\end{proof}
\begin{definition}\label{def41}
$$\Sigma_1:=\sum_{k\leq K}\sum_{a=1}^{q-1}\int_{\frac{a}{q}-\frac{1}{2q}}^{\frac{a}{q}+\frac{1}{2q}}\gamma_k(x)^2-\gamma_k\left(\frac{a}{q}\right)^2\: dx$$
$$\Sigma_2:=\sum_{k_1<k_2\leq K}(-1)^{k_1+k_2}\sum_{a=1}^{q-1}\int_{\frac{a}{q}-\frac{1}{2q}}^{\frac{a}{q}+\frac{1}{2q}}\left(\gamma_{k_1}(x)-\gamma_{k_1}\left(\frac{a}{q}\right)\right)\left(\gamma_{k_2}(x)-\gamma_{k_2}\left(\frac{a}{q}\right)\right)\: dx$$
$$\Sigma_3:=\sum_{k\leq K}\sum_{a=1}^{q-1}\int_{\frac{a}{q}-\frac{1}{2q}}^{\frac{a}{q}+\frac{1}{2q}}\left( H(x)-H\left(\frac{a}{q}\right)\right)\left(\gamma_{k}(x)-\gamma_{k}\left(\frac{a}{q}\right)\right)\: dx$$
$$\Sigma_4:=\sum_{k\leq K}\sum_{a=1}^{q-1}\int_{\frac{a}{q}-\frac{1}{2q}}^{\frac{a}{q}+\frac{1}{2q}}\left( H(x)-H\left(\frac{a}{q}\right)\right)^2\: dx$$
\end{definition}

\section{A lower bound for $\Sigma_1$}

We first give some facts and definitions which are also of importance in the estimate of the other terms.
\begin{lemma}\label{lem51}
Let $r$ be a rational number of depth $k$,
$$r=[0; b_1, \ldots, b_k]\:,\ k\geq 2\:.$$
Then there is exactly one pair $\mathcal{P}_k=(\mathcal{C}_1, \mathcal{C}_2)$, $\mathcal{C}_1$ a
cell of depth $k$  and $\mathcal{C}_2$ a cell of depth $k+1$, such that $r$ is a common
endpoint of both of the cells, namely
$$\mathcal{C}_1=\mathcal{C}(b_1,\ldots b_k)\ \ \text{and}\ \ \mathcal{C}_2=\mathcal{C}(b_1, \ldots, b_k-1, 2)\:.$$
\end{lemma}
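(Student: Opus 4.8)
The plan is to work directly from the recursion (2.1) and the basic fact that a rational number $r=[0;b_1,\ldots,b_k]$ (with the convention $b_k\ge 2$, so that the depth is unambiguous) is a common endpoint of exactly the cells whose defining words are prefixes-with-a-twist of $(b_1,\ldots,b_k)$. First I would recall from Definition \ref{def22} that the cell $\mathcal{C}(c_1,\ldots,c_m)$ has as its two endpoints the rationals $[0;c_1,\ldots,c_m]$ and $[0;c_1,\ldots,c_{m-1},c_m+1]$, and that these two representations are precisely the two continued-fraction expansions of each endpoint (every rational has exactly two finite continued-fraction expansions, one ending in a term $\ge 2$ and one ending in $1$, related by $[0;\ldots,c_m]=[0;\ldots,c_m-1,1]$). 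So to say that $r$ is an endpoint of $\mathcal{C}(c_1,\ldots,c_m)$ is to say that one of the two CF-expansions of $r$ equals $(c_1,\ldots,c_m)$ or $(c_1,\ldots,c_{m-1},c_m+1)$.

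Next I would enumerate the possibilities. The number $r$ has exactly two continued-fraction expansions: the given one $[0;b_1,\ldots,b_k]$ of length $k$ (depth $k$, since $b_k\ge2$), and $[0;b_1,\ldots,b_{k-1},b_k-1,1]$ of length $k+1$. A cell $\mathcal{C}(c_1,\ldots,c_m)$ has $r$ as an endpoint iff $(c_1,\ldots,c_m)$ is one of these two expansions, or $(c_1,\ldots,c_{m-1},c_m+1)$ is one of these two expansions. Checking the four cases: (i) $(c_1,\ldots,c_m)=(b_1,\ldots,b_k)$ gives $\mathcal{C}_1=\mathcal{C}(b_1,\ldots,b_k)$, a cell of depth $k$; (ii) $(c_1,\ldots,c_m)=(b_1,\ldots,b_{k-1},b_k-1,1)$ would be a cell of depth $k+1$, but its word ends in $1$, which is not allowed for a cell word of depth $>1$ unless we reinterpret — more precisely its last index must be $\ge 1$ and here one must check it is actually the word $(b_1,\ldots,b_k-1,1)$; I will show this coincides with case (iv) below; (iii) $(c_1,\ldots,c_{m-1},c_m+1)=(b_1,\ldots,b_k)$ forces $c_m=b_k-1$ and $(c_1,\ldots,c_m)=(b_1,\ldots,b_{k-1},b_k-1)$, which has depth $k$ but represents the endpoint pair $\{[0;b_1,\ldots,b_k-1],[0;b_1,\ldots,b_k]\}$ — and here $[0;b_1,\ldots,b_k-1]\ne r$ in general, so $r$ is an endpoint only through the second endpoint; however this cell has depth $k$, not $k+1$, so for the pair in the statement it must be excluded unless $b_k-1\ge 2$... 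I will need to treat the degenerate subcase $b_k=2$ separately, where $(b_1,\ldots,b_{k-1},1)$ is not a legal depth-$k$ word and instead collapses; (iv) $(c_1,\ldots,c_{m-1},c_m+1)=(b_1,\ldots,b_{k-1},b_k-1,1)$ forces $c_m+1=1$, impossible. The surviving genuinely new cell beyond $\mathcal{C}_1$ is the depth-$(k+1)$ cell $\mathcal{C}(b_1,\ldots,b_k-1,2)$, whose endpoints are $[0;b_1,\ldots,b_k-1,2]$ and $[0;b_1,\ldots,b_k-1,3]$; I must verify $[0;b_1,\ldots,b_k-1,2]=[0;b_1,\ldots,b_{k-1},b_k-1+1/2]=[0;b_1,\ldots,b_{k-1},b_k-1/2]$... here I would instead directly use the identity $[0;b_1,\ldots,b_k-1,2]=[0;b_1,\ldots,b_{k-1},b_k-1,1,1]=[0;b_1,\ldots,b_{k-1},b_k-1,2]$ and match it against the length-$(k+1)$ expansion $[0;b_1,\ldots,b_{k-1},b_k-1,1]$ of $r$ — wait, these differ in the last term ($2$ versus $1$), so the correct matching is via endpoint (ii): the cell $\mathcal{C}(b_1,\ldots,b_{k-1},b_k-1,2)$ has one endpoint $[0;b_1,\ldots,b_{k-1},b_k-1,2]$ and the other $[0;b_1,\ldots,b_{k-1},b_k-1,3]$, and neither is literally $r$; rather I should use $\mathcal{C}(b_1,\ldots,b_k-1,1)$...

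Let me restate the clean argument I would actually write: a rational $r$ is a common endpoint of a depth-$k$ cell and a depth-$(k+1)$ cell precisely when we split its depth-$k$ expansion $[0;b_1,\ldots,b_k]$ as the left endpoint of $\mathcal{C}(b_1,\ldots,b_{k-1},b_k)$ (if we think of it as the ``$[0;c_1,\ldots,c_m]$''-endpoint, here $b_k\ge2$) and simultaneously as the ``$[0;c_1,\ldots,c_{m-1},c_m+1]$''-endpoint of the cell $\mathcal{C}(b_1,\ldots,b_{k-1},b_k-1,1,\ldots)$; the unique way to realize $r$ as this second type of endpoint of a depth-$(k+1)$ cell uses $[0;b_1,\ldots,b_k]=[0;b_1,\ldots,b_{k-1},b_k-1,1]=[0;b_1,\ldots,b_{k-1},b_k-1,2-1]$ matched with the $(c_m+1)$-slot, giving $\mathcal{C}_2=\mathcal{C}(b_1,\ldots,b_{k-1},b_k-1,2)$; uniqueness follows because any cell having $r$ as an endpoint must have a defining word that is one of the two CF-expansions of $r$ up to the $\pm1$ in the last term, and enumerating these finitely many candidates (done above) shows $\mathcal{C}_1,\mathcal{C}_2$ are the only ones of the required depths. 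The hypothesis $k\ge2$ is used to ensure $(b_1,\ldots,b_k-1,2)$ is a legitimate cell word (in particular that $b_k-1\ge1$, using $b_k\ge2$, and that the word has length $k+1\ge3$ so no depth-collapsing identity applies).

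The main obstacle I anticipate is purely bookkeeping: carefully pinning down the convention ``depth of a rational'' so that $b_k\ge 2$, handling the boundary case $b_k=2$ (where $b_k-1=1$ and one must be careful that $[0;b_1,\ldots,b_{k-1},1,2]$ is still the intended cell and that it does not secretly have depth $k$ via the identity $[0;\ldots,c,1,2]=[0;\ldots,c,1,2]$ — it doesn't collapse since the penultimate term is $1$ only, not the last), and making airtight the claim that the CF-expansion of a rational is unique up to the single $[\ldots,a]\leftrightarrow[\ldots,a-1,1]$ move, so that the candidate cells are genuinely exhausted by the four cases above. Once those conventions are fixed, each case is a one-line check with (2.1).
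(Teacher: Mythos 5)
Your basic strategy --- enumerate every cell having $r$ as an endpoint by matching the words $(c_1,\dots,c_m)$ and $(c_1,\dots,c_{m-1},c_m+1)$ against the two finite continued-fraction expansions $[0;b_1,\dots,b_k]$ and $[0;b_1,\dots,b_{k-1},b_k-1,1]$ of $r$ --- is sound, and it is essentially the paper's route. But the write-up collapses at exactly the two decisive points. First, the identification of $\mathcal{C}_2$: your cases (ii) and (iv) correctly show that no depth-$(k+1)$ cell can have $r$ as its ``$c_m+1$''-type endpoint (that would force $c_{k+1}=0$), so the only depth-$(k+1)$ cell with $r$ as an endpoint is the one whose word \emph{is} the length-$(k+1)$ expansion of $r$, namely $\mathcal{C}(b_1,\dots,b_{k-1},b_k-1,1)$, whose endpoints are $[0;b_1,\dots,b_k-1,1]=r$ and $[0;b_1,\dots,b_k-1,2]$. (Your worry that a cell word may not end in $1$ is unfounded: Definition \ref{def22} only requires the entries to lie in $\mathbb{N}^*$.) Yet your ``clean argument'' then asserts $\mathcal{C}_2=\mathcal{C}(b_1,\dots,b_k-1,2)$ by matching the expansion ending in $1$ with the $(c_m+1)$-slot --- precisely the matching your own case (iv) ruled out; under Definition \ref{def22} the cell $\mathcal{C}(b_1,\dots,b_k-1,2)$ has endpoints $[0;b_1,\dots,b_k-1,2]$ and $[0;b_1,\dots,b_k-1,3]$, neither of which equals $r$. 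You noticed this mid-proof (``neither is literally $r$'') and never resolved it: the cell of depth $k+1$ bordering $r$ is the one bounded by $r$ and $[0;b_1,\dots,b_k-1,2]$, i.e.\ $\mathcal{C}(b_1,\dots,b_k-1,1)$ in the notation of Definition \ref{def22}, and forcing the displayed formula instead of cleanly fixing this index leaves the central claim of your argument unproved.

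Second, uniqueness of $\mathcal{C}_1$ is never actually established. Your case (iii) exhibits the genuine competitor: $\mathcal{C}(b_1,\dots,b_{k-1},b_k-1)$ is a legitimate cell of depth $k$ whose second endpoint is $[0;b_1,\dots,b_{k-1},(b_k-1)+1]=r$, so it too has $r$ as an endpoint; dismissing it on the grounds that ``this cell has depth $k$, not $k+1$'' is no argument, since depth $k$ is exactly what $\mathcal{C}_1$ is required to have, so this cell directly threatens uniqueness of the pair. The paper disposes of it by observing that the depth-$(k+1)$ cell adjacent to $r$ is a \emph{proper subset} of $\mathcal{C}(b_1,\dots,b_{k-1},b_k-1)$ (both lie on the same side of $r$), so that candidate pair is nested rather than bordering $r$ from the two sides, which forces $\mathcal{C}_1=\mathcal{C}(b_1,\dots,b_k)$; an argument of this kind, making explicit the implicit requirement that the two cells of the pair be disjoint, is needed and is absent from your proposal.
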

\begin{proof}
By definition a cell $\tilde{\mathcal{C}}$ of depth $k+1$ that has an endpoint of depth $k$ must be of the form 
$$\tilde{\mathcal{C}}=\mathcal{C}(a_1, \ldots, a_k-1, 2),\ \ a_k\geq 2\:.$$
Thus we must have
$$\mathcal{C}_2=\mathcal{C}(b_1,\ldots b_{k-1}, 2)\:.$$
By Definition \ref{def22}, the cells of order $k$ bordering on $r$ are
$$\tilde{\mathcal{C}}=\mathcal{C}(b_1,\ldots, b_k-1)\ \ \text{and}\ \ \tilde{\mathcal{C}}=\mathcal{C}(b_1,\ldots, b_k)\:.$$
Since $\mathcal{C}(b_1,\ldots, b_k-1, 2)$ is a proper subset of $\tilde{\mathcal{C}}$,
we must have $\mathcal{C}_1=\mathcal{C}(b_1, \ldots, b_k)$.
\end{proof}
\begin{definition}\label{def51}
We call the pair $\mathcal{P}_k=(\mathcal{C}_1, \mathcal{C}_2)$ of Lemma \ref{lem51} the pair
of order $k$ of $r$.
For each $k$ we partition the set of intervals 
$$I_a:=\left[\frac{a}{q},\frac{a+1}{q}\right]$$
into two classes:
$$C_{k,1}:=\{I_a\::\: I_a\ \text{and}\ I_{a+1}\ \ \text{do not contain a rational number of depth}\ k  \}$$
$$C_{k,2}:=\{I_a\::\: I_a\ \text{or}\ I_{a+1}\ \text{contains a rational number of depth}\ k  \}\:.$$
\end{definition}
We first give a lower bound for the contribution of the intervals of class $C_{k,1}$.\\
Each $I_{a^*}\in C_{k,1}$ is entirely contained in a cell $c(I_{a^*})=\mathcal{C}(b_1, \ldots, b_k)$ of order $k$. Let
$$[b_1, \ldots, b_k]=:\frac{p_k}{q_k}\:.$$
We write $a=a_0+h$, where 
$$a_0=\min\{a\::\: I_a\subset   c(I_{a^*})\} \:. $$
We now evaluate
$$C_a=\int_{\frac{a}{q}-\frac{1}{2q}}^{\frac{a}{q}+\frac{1}{2q}}\gamma_k(x)^2-\gamma_k\left(\frac{a}{q}\right)^2\: dx$$
From Lemmas \ref{lem21} and \ref{lem22} we obtain:
$$\gamma_k'(x)=-q_{k-1}\log\left(\frac{1}{\alpha_k(x)}\right)+q_k^{-1}\left(\frac{p_k}{q_k}-x\right)^{-1}$$
 and thus 
 $$\gamma_k''(x)=2q_k^{-1}\left(\frac{p_k}{q_k}-x\right)^{-2}+O\left(\left(\frac{p_k}{q_k}-x\right)^{-1}\right)\:.$$
 We also have that 
 $$\left|\frac{p_k}{q_k}-\frac{a_0}{q}\right|\geq \frac{1}{qq_k}\:.$$
 This leads to 
 \[
 \frac{d^2}{dx^2}(\gamma_k(x)^2)=2\gamma_k'(x)^2+2\gamma_k(x)\gamma_k''(x)\geq 4q_k^{-2}\frac{q^2\log\left( \frac{q}{h}\right)}{(h+\theta(x))^2}\:,\ \ 0\leq \theta(x)\leq 1\:,   \tag{5.1}
 \]
 if $q_k\leq q^{1/3}$.\\
 By Taylor's theorem we obtain with $\theta_1(u)$, $\theta_2(u)\in(0,1),$
 \begin{align*}
 \tag{5.2} C_a&=\int_{0}^{\frac{1}{2q}}\gamma_k\left(\frac{a}{q}+u\right)^2+\gamma_k\left(\frac{a}{q}-u\right)^2-2\gamma_k\left(\frac{a}{q}\right)^2\: du\\ 
 &=\int_0^{\frac{1}{2q}} \frac{u^2}{2}\left(\frac{d^2}{dx^2}\left(\gamma_k\left(\frac{a}{q}+\theta_1(u)\right)^2\right)+\frac{d^2}{dx^2}\left(\gamma_k\left(\frac{a}{q}-\theta_2(u)\right)^2\right)\right)du\\
 &\geq c_1q_k^{-2}q^{-1}h^{-2}\log\left(\frac{q}{h} \right)
\end{align*}
for $q_k\leq q^{1/3}$ (where $c_1>0$ is an absolute constant).\\
We now investigate the contribution of the intervals $I_a\subset c_{k,2}$. We assume that
$k$ is odd. The case $k$ even is treated similarly.\\
Let $r$ be a rational number of depth $k$ in 
$$I_a=\left(\frac{a}{q}, \frac{a+1}{q}\right)\:.$$
We write
$$r=\frac{a}{q}+\frac{1}{2q}+w_0\:,\ \ w_0\in\left(-\frac{1}{2q}, \frac{1}{2q}\right)\:.$$
By Lemma \ref{lem51}, there is exactly one pair $\mathcal{P}_k$ of cells $(\mathcal{C}_1, \mathcal{C}_2)$, $\mathcal{C}_1$ of depth $k$, $\mathcal{C}_2$ of depth $k+1$,
such that $r$ is a common endpoint of both, namely
$$\mathcal{C}_1=\mathcal{C}(b_1,\ldots, b_k)\ \ \text{and}\ \ \mathcal{C}_2=\mathcal{C}(b_1, \ldots, b_k-1, 2)\:.$$
We combine the contributions of order $k$ to $I_a$ and of order $k+1$ to $I_{a+1}$,
i.e. we consider 
\begin{align*}
C(a, k)&:=\int_{\frac{a}{q}-\frac{1}{2q}}^{\frac{a}{q}+\frac{1}{2q}+w_0}\gamma_k(x)^2-\gamma_k\left(\frac{a}{q}\right)^2\: dx+\int_{\frac{a}{q}+\frac{1}{2q}+w_0}^{\frac{a+1}{q}+\frac{1}{2q}}\gamma_{k+1}(x)^2-\gamma_{k+1}\left(\frac{u+1}{q}\right)^2\: dx\\
&=: I(a, k, w_0)\:.
\end{align*}
and study $I(a, k, w_0)$ as a function of $w_0$. We first treat the case $w_0=0$.\\
For $u>0$ we write 
$$r-u=[b_1,\ldots, b_k+v]=[b_1,\ldots, b_k-1, (1+v)^{-1}]\:.$$
By Lemma \ref{lem21}, we obtain:
$$\gamma_k(r-u)-\gamma_k\left(\frac{a}{q}\right)=\left(\gamma_{k+1}(r+u)-\gamma_{k+1}\left(\frac{a+1}{q}\right)\right)(1+O(q^{-1}))\:.$$
We obtain
$$I(a,k,0)\geq c_2q_k^{-2}q^{-1}\log q\:.$$
A simple computation shows that
$$\frac{d^2I(a,k,w_0)}{dw_0^2}>0\:.$$
Thus we also have:
\[
C(a,k)\geq c_3q_k^{-2}q^{-1}\log q\:,\ \ \text{for $q_k\leq q^{1/3}$.} \tag{5.3}
\]
We still need a bound for the contribution of a cell, which is uniform in $q_k$. The width of the cell of depth $k$ with partial denominators $q_k$ is $O(1/q^2_k)$. From the bound 
$$\beta_k(x)\leq \frac{1}{q_k}\:,$$
we obtain 
\[
\int_C \gamma_k(x)^2dx=O(q_k^{-4})\:.  \tag{5.4}
\]
We now collect the estimates (5.2), (5.3), (5.4). Summing over $h$, $p_k$ and
$q_k$ we obtain 
\[
\Sigma_1\geq  c_4 q^{-1}(\log q)^2\:,  \tag{5.5}
\]
for $K$ sufficiently large.

\section{Upper bound for the other sums}
The estimate of the other sums is carried out with very similar methods. To 
estimate the sum $\Sigma_2$ - the most difficult case - we again collect pairs $\mathcal{P}_k$ of order $k$ and estimate integrals
$$\int_{-w_0}^{w_0} \gamma_{k_1}(r+v)(\gamma_{k_2}(r+v)-\gamma_{k_2+1}(r+v))\: dv\:,$$
which arise from the alternating signs in (3.8). We obtain
\[
\Sigma_i=o(q^{-1}(\log q)^2)\ \ (i=2, 3, 4)\:.  \tag{6.1}
\]
Theorem \ref{main} now follows from (5.4) and (6.1).

\vspace{10mm}

\end{document}